\DeclarePairedDelimiter\floor{\lfloor}{\rfloor}
\definecolor{webgreen}{rgb}{0,.5,0}
\definecolor{webbrown}{rgb}{.6,0,0}
\newcommand{\seqnum}[1]{\href{https://oeis.org/#1}{\underline{#1}}}
\DeclarePairedDelimiter{\abs}   {\lvert } {\rvert }
\DeclareMathOperator\artanh{artanh}
\theoremstyle{plain}
\newtheorem{theorem}{Theorem}
\theoremstyle{definition}
\theoremstyle{remark}
\newtheorem{remark}[theorem]{Remark}
\title{The ordered Bell numbers as weighted sums of odd or even Stirling numbers of the second kind}
\author{Jacob Sprittulla \\ sprittulla@alice-dsl.de}
\date{\today}
\begin{document}
	
\maketitle
	
\begin{abstract}
	For the Stirling numbers of the second kind $S(n,k)$ and the ordered Bell numbers $B(n)$, we prove the identity $\sum_{k=1}^{n/2} S(n,2k)(2k-1)! = B(n-1)$. An analogous identity holds for the sum over odd $k$'s. 
\end{abstract}

\section{Introduction}
	For integers $0 \leq k \leq n$, we denote by $S(n,k)$ the Stirling numbers of the second kind and by $B(n)$ the ordered Bell numbers. These sequences can be found in the On-Line Encyclopedia of Integer Sequences (OEIS) \cite{OEIS} as \seqnum{A008277} and \seqnum{A000670}. They are related by
	\begin{align}
		\label{eq:sum1}
		B(n) = \sum_{k=0}^n k! S(n,k)   
		\text{.}
	\end{align}
	In this short note, we will give four different ways to express the ordered Bell numbers as a  weighted sum of odd or even Stirling numbers of the second kind, where the mentioned parity refers to the second argument.
	
	\begin{theorem}
	\label{tm:4ways}
		For $n \geq 1$, we have
		\begin{align}
			\label{eq:sumk0}
			B(n) 
			&= (-1)^{n+1} + 2 \sum_{k=0}^{\floor{n/2}} (2k)! S(n,2k)   
			= (-1)^n     + 2 \sum_{k=0}^{\floor{n/2}} (2k+1)! S(n,2k+1)   \\
			\label{eq:sumk1}
			&=\sum_{k=1}^{\floor{(n+1)/2}} (2k-1)! S(n+1,2k)
            =\sum_{k=0}^{\floor{(n+1)/2}} (2k)! S(n+1,2k+1) 
			\text{.}
		\end{align}
	\end{theorem}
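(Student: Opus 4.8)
The plan is to prove all four identities by working with exponential generating functions (EGFs), since both the Stirling numbers of the second kind and the ordered Bell numbers have clean EGF descriptions. Let me think about what the key generating-function facts are and how the parity restriction interacts with them.

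First I would recall the standard EGF identities. For fixed $k$, the Stirling numbers satisfy $\sum_{n \geq 0} S(n,k) \frac{x^n}{n!} = \frac{(e^x-1)^k}{k!}$, so that $\sum_{n \geq 0} k!\, S(n,k) \frac{x^n}{n!} = (e^x-1)^k$. Summing over all $k \geq 0$ gives the familiar EGF of the ordered Bell numbers, $\sum_{n \geq 0} B(n) \frac{x^n}{n!} = \frac{1}{1-(e^x-1)} = \frac{1}{2-e^x}$, which encodes \eqref{eq:sum1}.

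The key idea is that to isolate even or odd values of $k$ I would split this geometric series into its even and odd parts. Writing $u = e^x - 1$, the sum over even $k$ of $u^k$ is $\frac{1}{1-u^2}$ and the sum over odd $k$ is $\frac{u}{1-u^2}$; adding or subtracting recovers $\frac{1}{1-u}$ and shows $\frac{1}{1-u} = \frac{1}{1-u^2} \pm \frac{u}{1-u^2}$. Concretely, the generating function for $\sum_k (2k)!\,S(n,2k)$ is $\frac{1}{1-(e^x-1)^2} = \frac{1}{(2-e^x)e^x} \cdot e^x = \frac{1}{2e^x - e^{2x}}$, and similarly the odd sum has EGF $\frac{e^x-1}{2e^x-e^{2x}}$. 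I would then verify, by direct algebra on these rational expressions in $e^x$, that $\frac{2}{2e^x-e^{2x}}$ equals $\frac{1}{2-e^x}$ plus a correction whose EGF is $\sum_n (-1)^{n+1} \frac{x^n}{n!}$; since that correction is just $-e^{-x}$ up to sign, the whole point reduces to checking the clean identity $\frac{2}{2e^x-e^{2x}} = \frac{1}{2-e^x} + (\text{term producing } (-1)^{n+1})$ at the level of rational functions of $e^x$, which is routine once the common denominator is formed. The two identities in \eqref{eq:sumk0} then follow by reading off coefficients, and the odd-parity version is obtained by the analogous subtraction.

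For the two identities in \eqref{eq:sumk1}, which relate the parity-restricted sums at argument $n+1$ directly to $B(n)$ with no sign correction, I expect the cleanest route is to differentiate. Since $\frac{d}{dx}(e^x-1)^{k} = k(e^x-1)^{k-1} e^x$, differentiation lowers the index and shifts $n \mapsto n+1$ in coefficients, which is exactly the index shift appearing in \eqref{eq:sumk1}; concretely $\frac{d}{dx}\frac{1}{1-(e^x-1)^2}$ and $\frac{d}{dx}\frac{e^x-1}{1-(e^x-1)^2}$ should reproduce the EGFs of the two right-hand sides of \eqref{eq:sumk1}, and I would check these collapse to $\frac{e^x}{2-e^x}$, the EGF of $B(n)$ shifted appropriately (or more precisely of $B(n+1)$-type data), matching coefficients. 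Alternatively, one can deduce \eqref{eq:sumk1} from \eqref{eq:sumk0} combinatorially by conditioning on the block containing the element $n+1$, turning a sum over blocks of fixed parity into an unrestricted ordered-set-partition count.

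The main obstacle I anticipate is bookkeeping the sign corrections and the index shifts correctly rather than any deep difficulty: the term $(-1)^{n+1}$ versus $(-1)^n$ must be tracked through the even/odd split, and in \eqref{eq:sumk1} one must confirm that the correction term genuinely disappears after the shift $n \mapsto n+1$ and the factor-of-$(2k)$ absorption. I would handle this by reducing every claim to an identity between rational functions in the single variable $y = e^x$, clearing denominators once, and only then extracting coefficients; doing all the manipulation at the level of $y$ avoids repeated factorial bookkeeping and makes the cancellations transparent.
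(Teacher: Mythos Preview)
Your treatment of \eqref{eq:sumk0} is correct and is essentially the paper's argument transposed to the EGF level: the paper adds and subtracts the coefficient identities $\sum_k k!\,S(n,k)=B(n)$ and $\sum_k (-1)^k k!\,S(n,k)=(-1)^n$, which is exactly your even/odd split of $\tfrac{1}{1-u}$ into $\tfrac{1}{1-u^2}$ and $\tfrac{u}{1-u^2}$ with $u=e^x-1$, since $\tfrac{1}{1+u}=e^{-x}$.

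For \eqref{eq:sumk1} your differentiation plan has a genuine error. Differentiating the EGFs $\tfrac{1}{1-u^2}$ and $\tfrac{u}{1-u^2}$ shifts $n\mapsto n+1$ but leaves the factorial weights at $(2k)!$ and $(2k+1)!$; it does \emph{not} convert them into the $(2k-1)!$ and $(2k)!$ weights that \eqref{eq:sumk1} requires. The EGF of $\sum_k (2k-1)!\,S(m,2k)$ is instead $\sum_{k\ge 1}\tfrac{u^{2k}}{2k}=-\tfrac12\log(1-u^2)$, and it is this logarithmic series (respectively its odd companion $\artanh u$) whose derivative collapses to $\tfrac{e^x-1}{2-e^x}=\mathcal{B}(x)-1$ (respectively $\mathcal{B}(x)$). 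The paper takes this route in aggregated form: it shows that $\sum_k (k-1)!\,S(m,k)$ has EGF $-\log(2-e^x)$, differentiates to get $2\mathcal B(x)$ and hence $2B(m-1)$, and then splits by parity via the known identity $\sum_k(-1)^k(k-1)!\,S(m,k)=0$ for $m\ge 2$.

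Your alternative suggestion, on the other hand, works and is actually cleaner than either EGF argument. Feeding the recurrence $S(n+1,j)=j\,S(n,j)+S(n,j-1)$ into $(2k-1)!\,S(n+1,2k)$ gives $(2k)!\,S(n,2k)+(2k-1)!\,S(n,2k-1)$; summing over $k$ and reindexing the second sum over odd $j=2k-1$ yields $\sum_j j!\,S(n,j)=B(n)$ immediately, and the second half of \eqref{eq:sumk1} is analogous. The paper does not use this argument.
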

	We could not find the Identities \eqref{eq:sumk1} (respectively \eqref{eq:sum4} below) in the literature. The Identities \eqref{eq:sumk0} are direct consequences of known other equations, as explained below.
		
\section{Proof of the Theorem}		
	The formal (exponential) generating functions (gf's) of $B(n)$ and $S(k,n)$ (for fixed $k$) are given by
	\begin{align}
		\label{eq:gfB}
		\sum_{n=0}^\infty \frac{x^n}{n!} B(n)
		&= \frac{1}{2-e^x} 
		=: \mathcal{B}(x)	\\	
		\label{eq:gfS}
		\sum_{n=0}^\infty \frac{x^n}{n!} S(n,k) 
		&= \frac{(e^x-1)^k}{k!}  
		\text{,}
	\end{align}
	see Quaintance and Gould \cite{QGo16}. The following identities are known 
	\begin{align}
		\label{eq:sum2}
		\sum_{k=0}^n (-1)^k k! S(n,k) &= (-1)^n  \\
		\label{eq:sum3}
		\sum_{k=1}^n (-1)^k (k-1)! S(n,k) &=
		\begin{cases}
			-1, & \text{if $n=1$;}\\
			 0, & \text{if $n\geq 2$, }
		\end{cases}   
		\text{,}
	\end{align}
	see Boyadzhiev \cite[Appendix A]{Boy18}. Combining identities \eqref{eq:sum1} and \eqref{eq:sum2} gives
	\begin{align*}
		\sum_{k=0}^{\floor{n/2}} (2k)! S(n,2k) 
		&= \tfrac{1}{2} \left(B(n) +(-1)^n\right) \\
		\sum_{k=0}^{\floor{n/2}} (2k+1)! S(n,2k+1) 
		&= \tfrac{1}{2} \left(B(n) -(-1)^n\right)
		\text{.}
	\end{align*}	
	This shows Equation \eqref{eq:sumk0} of Theoreom \ref{tm:4ways}.
	
	We show below the following theorem.
	
	\begin{theorem}
	\label{tm:Hn}
		Put $H(n):=\sum_{k=1}^n (k-1)! S(n,k)$ for $n \geq 1$. Then
		\begin{align}
			\label{eq:sum4}
			H(n) &=
			\begin{cases}
			1, & \text{if $n=1$;}\\
			2 B(n-1),  & \text{if $n\geq 2$. }
			\end{cases}   
		\end{align}
	\end{theorem}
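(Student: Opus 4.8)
The plan is to attack Theorem~\ref{tm:Hn} through exponential generating functions, exactly as the gf's \eqref{eq:gfB} and \eqref{eq:gfS} invite. First I would form the egf of $H(n)$ and interchange the order of summation, using that $S(n,k)=0$ whenever $k>n$, to obtain
\begin{align*}
	\sum_{n=0}^\infty \frac{x^n}{n!} H(n)
	&= \sum_{k=1}^\infty (k-1)! \sum_{n=0}^\infty \frac{x^n}{n!} S(n,k)
	= \sum_{k=1}^\infty (k-1)!\,\frac{(e^x-1)^k}{k!}
	= \sum_{k=1}^\infty \frac{(e^x-1)^k}{k}
	\text{.}
\end{align*}
Recognising the right-hand side as the logarithmic series $-\ln(1-y)=\sum_{k\geq 1} y^k/k$ evaluated at $y=e^x-1$, I would conclude that the egf of $H$ is $\mathcal{H}(x):=-\ln(2-e^x)$.

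The crux is then a single differentiation. Since
\[
	\mathcal{H}'(x) = \frac{e^x}{2-e^x} = \frac{2}{2-e^x} - 1 = 2\,\mathcal{B}(x) - 1,
\]
where the middle equality is just the rewriting $e^x = -(2-e^x)+2$, the problem reduces to comparing coefficients. Differentiating an egf shifts the index, so the coefficient of $x^m/m!$ in $\mathcal{H}'(x)$ equals $H(m+1)$, while on the right the constant term is $2B(0)-1=1$ and every coefficient of $x^m/m!$ with $m\geq 1$ is $2B(m)$, because the lone $-1$ only affects the constant term. Reading off $m=0$ yields $H(1)=1$, and reading off $m\geq 1$ yields $H(n)=2B(n-1)$ for $n\geq 2$; thus the piecewise shape of \eqref{eq:sum4} is produced entirely by that constant $-1$.

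The step I expect to require the most care is the passage to $\mathcal{H}(x)=-\ln(2-e^x)$: the interchange of the two sums and the substitution of the series $e^x-1$ into the logarithmic series must be justified as operations on formal power series, not as analytic statements about convergence. This is legitimate because $e^x-1$ has vanishing constant term, so $(e^x-1)^k$ has lowest degree $k$; hence in both $\sum_{k\geq 1}(k-1)!\,(e^x-1)^k/k!$ and the rearranged double sum, each coefficient of $x^n$ is a finite sum over $k\leq n$, and the composition defines a genuine element of $\mathbb{Q}[[x]]$. Once this is granted the remaining manipulations are routine. As a sanity check I would verify the base case directly, $H(1)=0!\,S(1,1)=1$, matching both the claim and the $m=0$ reading; and I would note in passing that combining the resulting $H(n)=2B(n-1)$ with the known identity \eqref{eq:sum3} and splitting $H(n+1)$ into its even- and odd-$k$ parts delivers the two equalities of \eqref{eq:sumk1}.
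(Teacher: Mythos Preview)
Your proof is correct and follows essentially the same generating-function approach as the paper: compute $\mathcal{H}(x)=-\log(2-e^x)$ via the log series, differentiate, and compare with $\mathcal{B}(x)$. The only cosmetic difference is that the paper first adjusts $H(1)$ from $1$ to $2$ (adding $x$ to the egf) so that the derivative is exactly $2\mathcal{B}(x)$, whereas you differentiate $\mathcal{H}$ directly to get $2\mathcal{B}(x)-1$ and let the stray constant account for the special value at $n=1$; the two devices are equivalent.
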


	Combining identities \eqref{eq:sum3} and \eqref{eq:sum4} gives, for $n \geq 2$,	
	\begin{align}
		\label{eq:sum5}
		\sum_{k=1}^{\floor{n/2}} (2k-1)! S(n,2k)
		= \sum_{k=0}^{\floor{n/2}} (2k)! S(n,2k+1) 
		= B(n-1)
		\text{,}
	\end{align}	
	and therefore \eqref{eq:sumk1} of Theoreom \ref{tm:4ways}.
	It remains to prove \eqref{eq:sum4}.
	\begin{proof}[Proof of Theorem \ref{tm:Hn}]
		The gf $\mathcal{H}(x)$ of $H(n)$ is given by 
		\begin{align*}
		\mathcal{H}(x)	
		&=\sum_{n=0}^\infty \frac{x^n}{n!} \sum_{k=1}^n (k-1)! S(n,k)    
		 =\sum_{k=1}^\infty \frac{1}{k} (e^x-1)^k  \\
		&=-\log(2-e^x)
		\text{,}
		\end{align*}
		where we used \eqref{eq:gfS} and the Taylor series 
		$\log(1-z)=-\sum_{k=1}^{\infty} \frac{1}{k} z^{k}$ for $\abs{z}<1$. 
		Further, defining $G(1)=2$ and $G(n)=H(n)$ for $n \geq 2$, we get 
		\begin{align*}
		\mathcal{G}(x) 
		:= \sum_{n=0}^\infty \frac{x^n}{n!} G(n) 
		= x - \log(2-e^x)
		\text{,}
		\end{align*}
		noticing that $H(1)=1$.
		By elementary differentiation rules and \eqref{eq:gfB}, it follows that 
		$\mathcal{G}'(x)=\frac{2}{e^x-1}=2\mathcal{B}(x)$.
		Hence, by Wilf \cite[Formula 2.3.1]{Wil90}, we get $H(n)=G(n) = 2 B(n-1)$ for $n \geq 2$. 
	\end{proof} 

	\begin{remark}
		We put 
		$H_e(n):=\sum_{k=1}^{\floor{n/2}} (2k-1)! S(n,2k)$ 
		and 
		$H_o(n):=\sum_{k=0}^{\floor{n/2}} (2k)! S(n,2k+1)$.
		Using the same tools as in the proof above, we can also show that the corresponding gf's are given by
		\begin{align*}
			\sum_{n=0}^\infty \frac{x^n}{n!} H_e(n) 
			&=\artanh(e^x-1)  
			= \frac{1}{2} \log \left( \frac{e^x}{2-e^x} \right)  \\
			\sum_{n=0}^\infty \frac{x^n}{n!} H_o(n)
			&=-\frac{1}{2} \log \left( e^x (2-e^x) \right) 
			\text{.}
		\end{align*}
		With these gf's, we can deduce \eqref{eq:sum5} directly, without using \eqref{eq:sum3}.
	\end{remark}
	
	\begin{remark}
		The quantities $W(n,k):=k! S(n+1,k+1)$ are called the \textit{Worpitzky numbers}, see Vanderfelde \cite{Wil90} (OEIS \seqnum{A130850}). With this notation, \eqref{eq:sum5} can be written as ($n \geq 1$)
		\begin{align*}
			\sum_{k=1}^{\floor{n/2}} W(n,2k)
			= \sum_{k=0}^{\floor{n/2}} W(n,2k+1)
			= B(n)
			\text{.}
		\end{align*}
	\end{remark}	
	
\bibliographystyle{plain}

\begin{thebibliography}{99}
	\bibitem{Boy18}
	K. N. Boyadzhiev,
	\textit{Notes on the binomial transform}, 
	World Scientific, 2018.

	\bibitem{QGo16}
	J. Quaintance and H. W. Gould, 
	\textit{Combinatorial Identities for Stirling Numbers},
	World Scientific, 2016.
	
	\bibitem{OEIS}
	N. J. A. Sloane et al., The On-Line Encyclopedia of Integer Sequences,
	\url{https://oeis.org}, 2021.
	
	\bibitem{Van18}
	S. Vandervelde, 
	{The Worpitzky Numbers Revisited}, 
	\textit{The American Mathematical Monthly}, 
	\textbf{125} (2018), 198--206.
	
	\bibitem{Wil90}
	H. S. Wilf,
	\textit{Generatingfunctionology},
	Academic Press, 1990.	
	
\end{thebibliography}

\bigskip
\hrule
\bigskip

\noindent 2010 {\it Mathematics Subject Classification}: Primary 11A51;
Secondary 05A17.

\noindent \emph{Keywords:}
ordered Bell number, Stirling number of the second kind, Worpitzky number triangle.

\bigskip
\hrule
\bigskip
		
\end{document}